\newtheorem{thm}{Theorem}[section]
\newtheorem{cor}[thm]{Corollary}
\newtheorem{lem}[thm]{Lemma}
\newtheorem{prop}[thm]{Proposition}
\theoremstyle{definition}
\newtheorem{dfn}[thm]{Definition}
\theoremstyle{remark}
\newtheorem{rmk}[thm]{Remark}
\newtheorem{example}[thm]{Example}
\def\namedlabel#1#2{\begingroup
	#2%
	\def\@currentlabel{#2}%
	\phantomsection\label{#1}\endgroup
}
\g@addto@macro\bfseries{\boldmath}
\DeclareMathOperator{\id}{id}
\DeclareMathOperator{\gr}{Gr}
\newcommand\restr[2]{{
		\left.\kern-\nulldelimiterspace 
		#1 
		\vphantom{|} 
		\right|_{#2} 
}}
\newcommand{\ol}[1]{\overline{#1}}
\renewcommand{\AA}{\mathbb{A}}
\newcommand{\BB}{\mathbb{B}}
\newcommand{\DD}{\mathbb{D}}
\newcommand{\NN}{\mathbb{N}}
\newcommand{\RR}{\mathbb{R}}
\newcommand{\Kk}{\mathcal{K}}
\begin{document}
  
\title{A closed graph theorem for hyperbolic iterated function systems}
\author{Alexander Mundey}

\begin{abstract}
	In this note we introduce a notion of a morphism between two hyperbolic iterated function systems. We prove that the graph of a morphism is the attractor of an iterated function system, giving a Closed Graph Theorem, and show how it can be used to approach the topological conjugacy problem for iterated function systems. 
\end{abstract}

\maketitle

\section{Introduction}

Since Hutchinson's seminal paper \cite{Hut81}, iterated function systems have remained close to the heart of fractal geometry. Iterated function systems have continued to be studied and generalised in numerous directions.

In this article we focus on the dynamics of hyperbolic iterated function systems from a topological viewpoint, rather than  geometric or measure theoretic perspectives.
Although we work in the hyperbolic setting, we take a viewpoint similar to that of topological iterated function systems \cite{BWL14,Kam93,Kie02}.
 Determining whether two iterated function systems are topologically conjugate (in the sense of Definition~\ref{dfn:morphism} below) is a subtle problem, as highlighted in Example~\ref{ex:intervalmaps}. In contrast to symbolic dynamical systems, the topology of the space and attractor play a greater role determining conjugacy for iterated function systems. 

The main results of this article---Theorem~\ref{thm:graphattractor} and Corollary~\ref{cor:closedgraph}---establish a Closed Graph Theorem for an elementary notion of morphism between hyperbolic iterated function systems. Closed Graph Theorems are prevalent throughout mathematics with the most well-known results being those for continuous maps between Banach spaces and for continuous maps between compact Hausdorff spaces. 
In the latter case, the Closed Graph Theorem states that a continuous function $f \colon X \to Y$ between compact Hausdorff spaces is closed if and only its graph $\gr(f) = \{(x,f(x)) \mid x \in X\}$ is a closed subspace of $X \times Y$. An alternate formulation---which is perhaps more relevant to our context---is that $f \colon X \to Y$ is continuous if and only if $\gr(f)$ is itself a compact Hausdorff space. Applying a set-theoretic lens to functions---where $f$ is defined as its graph---one can interpret $f$ itself as a compact Hausdorff space. 

In Theorem~\ref{thm:graphattractor}, we show that the graph of a morphism between two iterated function systems is itself the attractor of an associated iterated function system. We show that by using this Closed Graph Theorem it is sometimes possible to deduce that two iterated function systems are not topologically conjugate. We also obtain a  characterisation of the code map from a labelled Cantor space as the attractor of a certain iterated function system. 
 
 \subsection*{Acknowledgements}
 The author would like to thank Adam Rennie for helpful discussions and careful proofreading. This research was supported by an Australian Government Research Training Program (RTP) Scholarship.
 
\section{Iterated function systems}
Although many generalisations of iterated function systems exist in the literature, in this article we restrict our attention entirely to the setting of hyperbolic systems. That is iterated function systems consisting of contractions. 
\begin{dfn}
	A {\emph{(hyperbolic) iterated function system}} $(X,\Gamma)$ consists of a complete metric space $(X,d)$ together with a finite collection $\Gamma$ of proper contractions on $X$. That is for each $\gamma \in \Gamma$ there exists $0 \le c_\gamma < 1$ such that $d(\gamma(x),\gamma(y)) \le c_\gamma d(x,y)$ for all $x,\,y \in X$. 
\end{dfn}

Denote by $\Kk(X)$ the collection of non-empty compact subsets of $X$.
Recall that the Hausdorff metric $d_H$ on $\Kk(X)$ is defined by
\[
d_H(A,B) = \max \Big\{ \sup_{x \in A} \inf_{y \in B} d(x,y), \,\sup_{y \in B} \inf_{x \in A} d(x,y) \Big\}
\]
for all $A,\,B \in \Kk(X)$. It is well-known  that if $(X,d)$ is a complete metric space then so is $(\Kk(X),d_H)$ (see for example \cite[Proposition 1.1.5]{Kig01}).

Given an iterated function system $(X,\Gamma)$ we abuse notation and also use $\Gamma$ to denote the \emph{Hutchinson operator} $\Gamma \colon \Kk(X) \to \Kk(X)$ defined, for all $K \in \Kk(X)$, by
\[
\Gamma(K) = \bigcup_{\gamma \in \Gamma} \gamma(K).
\]
Hutchinson \cite{Hut81} showed that $\Gamma$ is a contraction on $(\Kk(X),d_H)$, and consequently has a unique fixed point $\AA \in \Kk(X)$ by the Contraction Mapping Principle. The fixed-point $\AA$ is called the \emph{attractor} of $(X,\Gamma)$. In particular, $\Gamma(\AA) = \AA$ and for any $K \in \Kk(X)$ we have $d_H(\Gamma^k(K), \AA) \to 0$ as $k \to \infty$. 
This result is collectively referred to as Hutchinson's Theorem. Hutchinson's original result was stated for $X = \RR^n$, however the proof of the result for general hyperbolic systems remains nearly identical (cf. \cite[Theorem 1.1.7]{Kig01}).

A non-empty compact subset $K$ of $X$ is said to be \emph{backward invariant} if $K \subseteq \Gamma(K)$. Backward invariant sets were also called \emph{sub-self-similar sets} by Falconer~\cite{Fal95}. In hyperbolic iterated function systems, backward invariant sets are necessarily contained in the attractor. We require the following lemma in the sequel.
\begin{lem}\label{lem:backinvariant}
	Let $(X,d)$ be a complete metric space. Suppose that $(K_i)_{i=1}^{\infty}$ is a sequence in $\Kk(X)$ such that $K_i \subseteq K_{i+1}$ for all $i \in \NN$ and $d_H(K_i,K) \to 0$ for some $K \in \Kk(X)$.  Then $K_i \subseteq K$ for all $i \in \NN$.  In particular, if $(X,\Gamma )$ is a hyperbolic iterated function system with attractor $\AA$ and $K \in \Kk(X)$ is backward invariant, then $K \subseteq \AA$.
\end{lem}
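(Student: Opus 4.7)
The plan is to prove the first (more general) statement by a direct contradiction argument using the definition of the Hausdorff metric, and then derive the second (IFS-specific) statement by applying the first to the sequence of iterates $\Gamma^n(K)$, using monotonicity of the Hutchinson operator together with Hutchinson's Theorem.

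For the first assertion, I would fix an arbitrary index $i_0 \in \NN$ and argue by contradiction. Suppose there exists $x \in K_{i_0} \setminus K$. Since $K$ is compact (hence closed) and $x \notin K$, the distance $\delta := \inf_{y \in K} d(x,y)$ is strictly positive. The hypothesis $K_i \subseteq K_{i+1}$ ensures that $x \in K_j$ for every $j \ge i_0$, so
\[
d_H(K_j, K) \ge \sup_{z \in K_j} \inf_{y \in K} d(z,y) \ge \inf_{y \in K} d(x,y) = \delta > 0
\]
for all $j \ge i_0$. This contradicts $d_H(K_j, K) \to 0$, so $K_{i_0} \subseteq K$, and since $i_0$ was arbitrary we conclude $K_i \subseteq K$ for every $i$.

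For the ``in particular'' statement, I would first record that the Hutchinson operator $\Gamma$ is monotone: if $A \subseteq B$ then $\Gamma(A) = \bigcup_{\gamma \in \Gamma}\gamma(A) \subseteq \bigcup_{\gamma \in \Gamma}\gamma(B) = \Gamma(B)$. Applying monotonicity inductively to the backward invariance hypothesis $K \subseteq \Gamma(K)$ yields
\[
K \subseteq \Gamma(K) \subseteq \Gamma^2(K) \subseteq \cdots \subseteq \Gamma^n(K) \subseteq \cdots,
\]
so setting $K_i := \Gamma^{i-1}(K)$ produces an increasing sequence in $\Kk(X)$. By Hutchinson's Theorem, already stated in the text, $d_H(\Gamma^n(K), \AA) \to 0$ as $n \to \infty$. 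The first part of the lemma therefore applies (with the limit set being $\AA$), giving $K = K_1 \subseteq \AA$, as required.

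The only subtle point is the first part, and the potential pitfall is remembering to use compactness of $K$ to extract a strictly positive distance $\delta$; everything else is a routine unpacking of the Hausdorff metric and of the monotonicity of $\Gamma$. I do not anticipate any serious obstacle.
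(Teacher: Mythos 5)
Your proposal is correct and follows essentially the same argument as the paper: the same contradiction via a strictly positive distance $\inf_{y\in K} d(x,y)$ extracted from compactness, and the same application of the first part to the increasing sequence $\Gamma^{k}(K)$ converging to $\AA$ by Hutchinson's Theorem. Your explicit remark on the monotonicity of the Hutchinson operator is a minor elaboration of a step the paper states without proof.
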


\begin{proof}
	Suppose for contradiction that there exists $i \in \NN$ and $x_0 \in K_i \setminus K$. Since $K$ is compact $\inf_{y \in K} d(x_0,y) > 0.$
	As $x_0 \in K_i \subseteq K_j$ for all $j \ge i$ it follows that
	\[
	d_H(K_j,K) = \max \Big\{ \sup_{x \in K_j} \inf_{y \in K} d(x,y), \,\sup_{y \in K} \inf_{x \in K_j} d(x,y) \Big\} \ge \inf_{y \in K} d(x_0,y).
	\]
	As $K$ is compact, $\inf_{y \in K} d(x_0,y)$ is strictly positive. This contradicts that $d_H(K_i,K) \to 0$.
	
	For the second statement note that if $K \subseteq \Gamma(K)$, then $\Gamma^k(K) \subseteq \Gamma^{k+1}(K)$ for all $k \in \NN$. 
	Hutchinson's Theorem implies that $d_H(\Gamma^k(K),\AA) \to 0$, so the second statement follows from the first.
\end{proof}

\section{Morphisms of iterated function systems}

Topological conjugacy of iterated function systems is a well-established notion and the problem of determining whether two systems are topologically conjugate can be approached in numerous ways (see for example \cite[Corollary 1.27]{Kam00}). 
When generalising conjugacy there are choices to be made about selecting an appropriate notion of morphism, and several approaches have been taken previously. Kieninger, for instance, describes semiconjugacy of topological iterated function systems  \cite[Definition 4.6.3]{Kie02}, which is analogous to the corresponding notion in symbolic dynamics. Another approach is via the fractal homeomorphisms of Barnsley \cite{Bar09} which use shift invariant sections of a code map. 

Expanding on the established definition of conjugacy we use the following---somewhat na\"ive---notion of morphism, related to Kieninger's semiconjugacies.

\begin{dfn} \label{dfn:morphism}
	A \emph{(topological) morphism} from an iterated function system $(X,\Gamma)$ to $(Y,\Lambda)$ is a pair $(f,\alpha)$ consisting of a continuous map $f \colon X \to Y$ and a function $\alpha \colon \Gamma \to \Lambda$ such that for each $\gamma \in \Gamma$ the diagram
	\[
		\begin{tikzcd}
		X \arrow[r,"\gamma"] \arrow[d, "f"'] & X \arrow[d, "f"] \\
		Y \arrow[r,"\alpha(\gamma)"] & Y  
		\end{tikzcd}
	\]
	commutes. We write $(f,\alpha) \colon (X,\Gamma) \to (Y,\Lambda)$ to mean that $(f,\alpha)$ is a morphism from $(X,\Gamma)$ to $(Y,\Lambda)$. We mention some special types of morphism. 
	\begin{enumerate}
		\item $(f,\alpha)$ is an \emph{embedding} if both $f$ and $\alpha$ are injective.
		\item $(f,\alpha)$ is a \emph{semiconjugacy} if both $f$ and $\alpha$ are surjective.
		\item$(f,\alpha)$ is an \emph{isomorphism} or \emph{conjugacy} if $f$ is a homeomorphism and $\alpha$ is bijective. In this case we say that $(X,\Gamma)$ is \emph{isomorphic} or \emph{conjugate} to $(Y,\Lambda)$. 
	\end{enumerate}
		Morphisms may be composed by setting $(f,\alpha) \circ (g,\beta) = (f \circ g, \alpha \circ \beta)$.
\end{dfn}

	In Definition~\ref{dfn:morphism} we have not made use of the metric space structure of either $X$ or $Y$, and this has its drawbacks. In particular, if  $(f,\alpha) \colon (X,\Gamma) \to (Y,\Lambda)$ is a morphism, then it is not typically true that $(f(X),\alpha(\Gamma))$ is an iterated function system since $f(X)$ is not necessarily a complete metric space. 	This could be amended by insisting that $f$ is also a closed map or that $X$ is compact (for example if $X$ is the attractor itself), however we do not require either assumption in what follows.

Morphisms in the sense of Definition~\ref{dfn:morphism} occur fairly naturally.
\begin{example}
	Let $\Omega_N := \{1,\ldots,N\}^{\NN}$. For each $w \in \Omega_N$ we write $w = w_1 w_2 w_3 \cdots$, where each $w_i \in \{1,\ldots,N\}$. Equip $\Omega_N$ with the metric $d \colon \Omega_N \times \Omega_N \to [0,\infty)$ defined by
	\[
	d(w,v) = \begin{cases}
		2^{1-\min\{k \colon w_k \ne v_k\}} & \text{if } w \ne v, \\
		0 &\text{if } w = v, \\
	\end{cases}
	\]
	for each $w,v \in \Omega_N$, so that $\Omega_N$ is a Cantor space. For each $1 \le i \le N$ consider the contraction $\sigma_i \colon \Omega_N \to \Omega_N$ defined by $
	\sigma_i (w_1w_2\cdots) = iw_1w_2\cdots.
	$
	Then $(\Omega_N, \Sigma_N := \{\sigma_1,\ldots,\sigma_N\})$ is a hyperbolic iterated function system. Since $\Omega_N$ is invariant under the Hutchinson operator $\Sigma_N$, it is necessarily the attractor.
	
	Now suppose that $(X,\Gamma)$ is an iterated function system with attractor $\AA$, and suppose that $L \colon \{1,\ldots,N\} \to \Gamma $ is a bijective labelling of the maps in $\Gamma$. Denoting $L(i)$ by $\gamma_i$, there is a continuous surjection $\pi_L \colon \Omega_{N} \to \AA$ called the \emph{code map associated to the labelling $L$} which satisfies $\pi_L \circ \sigma_i = \gamma_i \circ \pi_L$ for all $1 \le i \le N$, \cite[Theorem 3.2]{Hat85}. The code map may be defined explicitly by
	\[
	\{\pi_L(w_1w_2\cdots)\} = \bigcap_{k=1}^\infty \gamma_{w_1} \circ \cdots \circ \gamma_{w_k} (\AA).
	\]
	Let $\beta_L \colon \Sigma_N \to \Gamma$ denote the bijection $\beta_L(\sigma_i) = \gamma_i$. Then $(\pi_L,\beta_L) \colon (\Omega_{N} , \Sigma_{N}) \to (\AA,\Gamma)$ is a semiconjugacy.	
\end{example}

\begin{example}
	Subsystems of iterated function systems give examples of embeddings. 
	Indeed, if $(X,\Lambda)$ is an iterated function system and $\Gamma \subseteq \Lambda$, then the pair $(\id_X, \Gamma \hookrightarrow \Lambda)$ is an embedding. 
\end{example}

\begin{example}\label{ex:edgeofgasket}	
	Consider the iterated function system $(\RR,\Gamma = \{\gamma_1,\gamma_2\})$ with $\gamma_1(x) = \frac{x}{2}$ and $\gamma_2(x) = \frac{1+x}{2}$. Then $[0,1]$ is the attractor of $(\RR,\Gamma)$.
	Now let $(\RR^2,\Lambda = \{\lambda_1, \lambda_2,\lambda_3\})$ with $\lambda_1 (x,y) = (\frac{x}{2},\frac{y}{2})$ , $\lambda_2 (x,y) = (\frac{1+x}{2},\frac{y}{2})$, and $\lambda_3(x,y) = (\frac{2x+1}{4} , \frac{2y+\sqrt{3}}{4})$. The attractor of $(\RR^2,\Lambda)$ is a Sierpinski gasket with outer vertices at $(0,0),(1,0)$ and $(\frac{1}{2}, \frac{\sqrt{3}}{2})$. Let $f \colon \RR \to \RR^2$ be the closed embedding given by $f(x) = (\frac{x}{2},\frac{\sqrt{3}x}{2})$ and let $\alpha \colon \Gamma \to \Lambda$ be given by $\alpha(\gamma_1) = \lambda_1$ and $\alpha(\gamma_2) = \lambda_3$. Then $(f,\alpha) \colon (\RR,\Gamma) \to (\RR^2,\Lambda)$ is an embedding. 
\end{example}

In the previous examples $\alpha$ was an injection, but this is not always the case.
\begin{example}
	Let $(X,\Gamma)$ be an iterated function system and consider the product system $(X^2,\Gamma^2 = \{\gamma \times \gamma' \mid \gamma,\gamma'\in \Gamma\})$, where $\gamma \times \gamma' (x,y) = (\gamma(x),\gamma'(y))$. The pair $((x,y) \mapsto x, \gamma \times \gamma' \mapsto \gamma)$ defines a semiconjugacy. 
\end{example}	
	
Morphisms intertwine the induced dynamics on compact sets.
\begin{lem}[{cf. \cite[Proposition 4.6.4 (vi)]{Kie02}}]\label{lem:morphisms_hyperspace}
	Let $(f,\alpha) \colon (X,\Gamma) \to (Y,\Lambda)$ be a morphism and consider the $d_H$-continuous map $\ol{f} \colon \Kk(X) \to \Kk(Y)$ induced by $f$. Then $\ol{f}$ is a homomorphism between the single-map dynamical systems $(\Kk(X),\Gamma)$ and $(\Kk(Y),\alpha(\Gamma))$ in the sense that $\ol{f} \circ \Gamma = \alpha(\Gamma) \circ \ol{f}$. Here, $\alpha(\Gamma)$ is the Hutchinson operator for the system $(Y,\alpha(\Gamma))$
\end{lem}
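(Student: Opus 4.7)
The plan is essentially a direct computation, relying only on the definition of a morphism and the way unions interact with images.

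First I would note that $\ol{f}$ is well-defined as a map $\Kk(X) \to \Kk(Y)$, since $f$ is continuous and therefore sends compact sets to compact sets; continuity of $\ol{f}$ with respect to $d_H$ is assumed in the statement (and in any case is standard). So the content is the intertwining identity. Fix $K \in \Kk(X)$. I would compute
\[
\ol{f}(\Gamma(K)) = f\Bigl(\bigcup_{\gamma \in \Gamma} \gamma(K)\Bigr) = \bigcup_{\gamma \in \Gamma} f(\gamma(K)),
\]
using only that images commute with arbitrary unions.

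Next I would apply the defining commutativity of a morphism, $f \circ \gamma = \alpha(\gamma) \circ f$, to each term in the union, obtaining
\[
\bigcup_{\gamma \in \Gamma} f(\gamma(K)) = \bigcup_{\gamma \in \Gamma} \alpha(\gamma)(f(K)).
\]
Finally I would reindex the union over $\alpha(\Gamma)$ rather than $\Gamma$. This is the one step where I would pause: since $\alpha$ need not be injective, several elements of $\Gamma$ may map to the same element of $\alpha(\Gamma)$, but repeated terms in a union contribute nothing extra, so
\[
\bigcup_{\gamma \in \Gamma} \alpha(\gamma)(f(K)) = \bigcup_{\lambda \in \alpha(\Gamma)} \lambda(f(K)) = \alpha(\Gamma)(\ol{f}(K)),
\]
which gives exactly $\ol{f}(\Gamma(K)) = \alpha(\Gamma)(\ol{f}(K))$.

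There is no real obstacle here; the only mildly delicate point is the reindexing in the last display, which must be handled correctly because the paper explicitly allows non-injective $\alpha$ (as in the product-system example just before the lemma). Everything else is a formal manipulation using the morphism square and the set-theoretic identity $h(\bigcup_i A_i) = \bigcup_i h(A_i)$.
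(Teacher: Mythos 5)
Your proof is correct and is essentially identical to the paper's one-line computation: expand $\ol{f}(\Gamma(K))$ as a union, apply $f\circ\gamma = \alpha(\gamma)\circ f$ termwise, and recognise the result as $\alpha(\Gamma)(\ol{f}(K))$. Your extra remark that non-injectivity of $\alpha$ is harmless because repeated terms in a union contribute nothing is a sensible observation the paper leaves implicit, but it does not change the argument.
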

\begin{proof}
	For $K \in \Kk(X)$ we simply compute
	\begin{equation*}\label{eq:back_invar}
		\ol{f}(\Gamma(K)) = \bigcup_{\gamma \in \Gamma} f \circ \gamma(K) = \bigcup_{\gamma \in \Gamma} \alpha(\gamma) \circ f(K) = \alpha(\Gamma) (\ol{f}(K)). \qedhere
	\end{equation*}
\end{proof}
\begin{rmk}
	A morphism $(\pi,\alpha) \colon (X,\Gamma) \to (Y,\Lambda)$ is distinct from a homomorphism between the single-map systems $(\Kk(X),\Gamma)$ and $(\Kk(Y),\alpha(\Gamma))$. Indeed, if $X = Y$ and $\Gamma(K) = \Lambda(K)$ for all $K \in \Kk(X)$, then as a single-map system $(\Kk(X),\Gamma)$ is equal, not just conjugate, to $(\Kk(X),\Lambda)$.
	
	For example, fix $\gamma \in \Gamma$ and let $\gamma_0$ be a distinct copy of $\gamma$. Form the disjoint union $\Gamma_0 = \Gamma \sqcup \{\gamma_0\}$ so that $\gamma_0$ is a ``redundant'' map. Then $(\id_X,\Gamma \hookrightarrow \Gamma_0)$ is a morphism that is not a conjugacy as $\Gamma \hookrightarrow \Gamma_0$ does not surject, but the single-map systems $(\Kk(X),\Gamma)$ and $(\Kk(Y),\Gamma_0)$ are equal. 
\end{rmk}	
	
For hyperbolic systems, morphisms always map attractors to compact subsets of attractors so that the image is a subsystem of the codomain.
	 
	 \begin{lem}\label{lem:imageofAinB}
	 	Let $(X,\Gamma)$ and $(Y,\Lambda)$ be  iterated function systems with attractors $\AA$ and $\BB$, respectively. If $(f,\alpha) \colon (X,\Gamma) \to (Y,\Lambda)$ is a morphism, then $f(\AA) \subseteq \BB$. In particular, $(f(\AA),\alpha(\Gamma))$ embeds in $(\BB,\Lambda)$.
	 \end{lem}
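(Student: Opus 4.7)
The plan is to exhibit $f(\AA)$ as a backward invariant subset of the codomain system and then apply Lemma~\ref{lem:backinvariant}. Since $\AA$ is compact and $f$ is continuous, $f(\AA) \in \Kk(Y)$, so it makes sense to ask whether it is backward invariant for $(Y,\Lambda)$.

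First, I would use Lemma~\ref{lem:morphisms_hyperspace} to compute the action of the Hutchinson operator $\alpha(\Gamma)$ on $f(\AA)$. Writing $\ol{f} \colon \Kk(X) \to \Kk(Y)$ for the induced map and recalling that $\Gamma(\AA) = \AA$, that lemma gives
\[
\alpha(\Gamma)(f(\AA)) = \alpha(\Gamma)(\ol{f}(\AA)) = \ol{f}(\Gamma(\AA)) = \ol{f}(\AA) = f(\AA).
\]
In particular $f(\AA) = \alpha(\Gamma)(f(\AA)) \subseteq \Lambda(f(\AA))$, since $\alpha(\Gamma) \subseteq \Lambda$. Thus $f(\AA)$ is backward invariant for $(Y,\Lambda)$, and Lemma~\ref{lem:backinvariant} yields $f(\AA) \subseteq \BB$.

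For the ``in particular'' claim I would check that $(f(\AA),\alpha(\Gamma))$ is itself a hyperbolic iterated function system and that the obvious pair built from the inclusion is an embedding. The space $f(\AA)$ is compact, hence complete. Each $\alpha(\gamma)$ is a proper contraction on $Y$ with contraction constant $c_{\alpha(\gamma)} < 1$, and the identity $\alpha(\Gamma)(f(\AA)) = f(\AA)$ shows that these contractions leave $f(\AA)$ invariant; restricting them yields proper contractions on $f(\AA)$ with the same constants. The inclusion $\iota \colon f(\AA) \hookrightarrow \BB$ is a closed (in fact compact-into-compact) continuous injection, and the inclusion $\alpha(\Gamma) \hookrightarrow \Lambda$ is by construction a function into $\Lambda$; the required diagram commutes tautologically because each $\alpha(\gamma)$ is literally the same map on both sides. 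Hence $(\iota,\alpha(\Gamma) \hookrightarrow \Lambda)$ is an embedding of $(f(\AA),\alpha(\Gamma))$ into $(\BB,\Lambda)$.

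The only mild subtlety is that the labelling $\alpha \colon \Gamma \to \Lambda$ need not be injective, so some care is needed when speaking of $\alpha(\Gamma)$ as a collection of contractions; but since the Hutchinson operator only depends on the image set, and since in the original definition $\Gamma$ was a \emph{set} of contractions, the identification of $\alpha(\Gamma)$ with its image in $\Lambda$ is harmless and the argument above goes through unchanged.
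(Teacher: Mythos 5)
Your proof is correct and follows essentially the same route as the paper: use Lemma~\ref{lem:morphisms_hyperspace} together with $\Gamma(\AA)=\AA$ to show $f(\AA)=\alpha(\Gamma)(f(\AA))\subseteq\Lambda(f(\AA))$, then invoke Lemma~\ref{lem:backinvariant}. Your additional verification of the ``in particular'' embedding claim is a welcome elaboration of a point the paper leaves implicit, but it does not change the argument.
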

	 
	 \begin{proof}
	 Lemma~\ref{lem:morphisms_hyperspace} implies that $f(\AA)$ is backward invariant as $f(\AA) = f(\Gamma(\AA)) = \alpha(\Gamma)(f(\AA)) \subseteq \Lambda(f(\AA))$.  Since $f(\AA)$ is compact it follows from Lemma~\ref{lem:backinvariant} that $f(\AA) \subseteq \BB$. 
	 \end{proof}

\section{A closed graph theorem for morphisms}

In this section we prove the main result of this article, a Closed Graph Theorem for morphisms of hyperbolic iterated function systems. We show that, when restricted to attractors, the graph of a morphism is itself the attractor of an iterated function system. To this end, we introduce a fibred system.
\begin{dfn} \label{dfn:fibred_system}
	Let $(X,\Gamma)$ and $(Y,\Lambda)$ be hyperbolic iterated function systems and suppose that $\alpha \colon \Gamma \to \Lambda$. For each $\gamma \in \Gamma$ define $\gamma^{\alpha} \colon X \times Y \to X\times Y$ by	
	\[
	\gamma^{\alpha}(x,y) = (\gamma(x),\alpha(\gamma)(y))
	\]
	and let $\Gamma \times_{\alpha} \Lambda := \{\gamma^{\alpha} \mid \gamma \in \Gamma\}.$
	Equipping $X \times Y$ with the metric $d_{\infty} ((x_1,y_1),(x_2,y_2)) = \max\{d_X(x_1,x_2), d_Y(y_1,y_2)\}$ the pair $(X \times Y , \Gamma \times_{\alpha} \Lambda)$ is a hyperbolic iterated function system which we call the \emph{system fibred over $\alpha$\footnote[2]{Any metric for which  $(X \times Y , \Gamma \times_{\alpha} \Lambda)$ is hyperbolic may be used.}}.
\end{dfn}

When $X = \Omega_N$ and $|\Lambda| = N$ the system fibred over $\alpha$ is referred to as the \emph{lifted} system by Barnsley in which it is used to describe fractal tops \cite[Definition 4.9.1]{Bar06}. We now come to the main result. 

\begin{thm}	\label{thm:graphattractor}
	Let $(X,\Gamma)$ and $(Y,\Lambda)$ be hyperbolic iterated function systems with attractors $\AA$ and $\BB$, respectively.
	If $(f,\alpha) \colon (X,\Gamma) \to (Y,\Lambda)$ is a morphism, then the attractor of the system 
	$(X \times Y , \Gamma \times_{\alpha} \Lambda)$ is the graph 
	\[
	\gr(\restr{f}{\AA}) := \{ (x,f(x)) \in X \times Y \mid x \in \AA\}
	\]
	 of $f$ restricted to $\AA$.  
	 Moreover, suppose that $\beta \colon \Gamma \to \Lambda$ and $\DD$ is the attractor of $(X \times Y , \Gamma \times_{\beta} \Lambda)$. Then there is a continuous function $g \colon \AA \to \BB$ making $(g,\beta) \colon (\AA,\Gamma) \to (\BB,\Lambda)$ a morphism if and only if $\DD$ is the graph of a function from $\AA$ to $\BB$ (continuity is automatic).
\end{thm}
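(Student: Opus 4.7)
The plan for the first statement is to verify that $\gr(\restr{f}{\AA})$ is a compact fixed point of the Hutchinson operator of $\Gamma \times_\alpha \Lambda$ on $\Kk(X \times Y)$, so that uniqueness in Hutchinson's theorem forces it to coincide with the attractor. Compactness is immediate, since $\gr(\restr{f}{\AA})$ is the image of the compact set $\AA$ under the continuous map $x \mapsto (x,f(x))$. For the fixed-point property I would compute
\[
(\Gamma \times_\alpha \Lambda)(\gr(\restr{f}{\AA})) = \bigcup_{\gamma \in \Gamma} \{(\gamma(x), \alpha(\gamma)(f(x))) : x \in \AA\},
\]
substitute $\alpha(\gamma)(f(x)) = f(\gamma(x))$ from the intertwining property of the morphism, and collapse the union using $\bigcup_{\gamma \in \Gamma} \gamma(\AA) = \Gamma(\AA) = \AA$ to recover $\gr(\restr{f}{\AA})$ on the right.

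For the forward direction of the second statement, given a continuous $g \colon \AA \to \BB$ with $(g,\beta)$ a morphism, the same computation shows $\gr(g)$ is a compact fixed point of $\Gamma \times_\beta \Lambda$ in $\Kk(X \times Y)$, so uniqueness identifies $\DD$ with $\gr(g)$.

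For the converse, suppose $\DD = \gr(g)$ for some function $g \colon \AA \to \BB$. The continuity of $g$ will follow from the standard compact-Hausdorff closed graph argument: the restriction of the projection $\pi_X \colon X \times Y \to X$ to $\DD$ is a continuous bijection from the compact space $\DD$ onto the Hausdorff space $\AA$, hence a homeomorphism, so $g = \pi_Y \circ (\pi_X|_\DD)^{-1}$ is continuous. For the intertwining $g \circ \gamma = \beta(\gamma) \circ g$, the fixed-point identity $\bigcup_{\gamma \in \Gamma} \gamma^\beta(\DD) = \DD$ forces each $\gamma^\beta(\DD) \subseteq \DD$; evaluating at $(x,g(x)) \in \DD$ yields $(\gamma(x), \beta(\gamma)(g(x))) \in \gr(g)$, i.e., $\beta(\gamma)(g(x)) = g(\gamma(x))$.

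The only delicate point I anticipate is the backward direction: although $\DD$ is given to us as a graph, the underlying function $g$ has no a priori reason to be continuous, and it is precisely compactness of $\DD$ as an attractor together with the Hausdorffness of $\AA$ that supplies continuity for free — this is the ``closed graph'' phenomenon reflected in the title. Everything else reduces to bookkeeping with the Hutchinson operator and the morphism intertwining identity.
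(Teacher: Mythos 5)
Your proposal is correct and follows essentially the same route as the paper: verify that the graph is a compact fixed point of the Hutchinson operator of the fibred system and invoke uniqueness of the attractor, then for the converse extract continuity from compactness of $\DD$ plus Hausdorffness of $\AA$ and read off the intertwining identity from invariance. The only cosmetic differences are that you obtain compactness of the graph as the continuous image of $\AA$ rather than as a closed subset of $\AA\times\BB$, and you prove the compact-Hausdorff closed graph step directly via the projection homeomorphism instead of citing it.
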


\begin{proof}
	
		Lemma~\ref{lem:imageofAinB} implies that $f(\AA) \subseteq \BB$, so $\gr(\restr{f}{\AA})$ is a subset of the compact set $\AA \times \BB$. 	
	 Since $f$ is continuous, $\gr(\restr{f}{\AA})$ 
	 is closed in $\AA \times \BB$ and therefore compact.
	 The iterated function system $(X \times Y , \Gamma \times_{\alpha} \Lambda)$  has a unique attractor by Hutchinson's Theorem.
	Hence, it suffices to show that
	\[
	\gr(\restr{f}{\AA}) = \bigcup_{\gamma \in \Gamma} \gamma^{\alpha}(\gr(\restr{f}{\AA})).
	\]			
	Since $f \circ \gamma = \alpha(\gamma) \circ f$ it follows that if $(x,f(x)) \in \gr(\restr{f}{\AA})$, then $\gamma^{\alpha}(x,f(x)) \in \gr(\restr{f}{\AA})$ for all $\gamma \in \Gamma$.	
	For the reverse inclusion fix $(x,f(x)) \in \gr(\restr{f}{\AA})$. Since $\AA = \bigcup_{\gamma \in \Gamma} \gamma(\AA)$, for each $x \in \AA$ there exists $\gamma \in \Gamma$ and $x_0 \in \AA$ such that $x = \gamma(x_0)$. Then
	\[
	\gamma^{\alpha}(x_0,f(x_0)) 
	= (\gamma(x_0),\alpha(\gamma) \circ f(x_0)) 
	= (x,f\circ \gamma (x_0)) 
	= (x,f(x)),
	\]
	so $(x,f(x)) \in 	\gamma^{\alpha}(\gr(\restr{f}{\AA}))$. As such, $\gr(\restr{f}{\AA})$ is the attractor of $(X \times Y , \Gamma \times_{\alpha} \Lambda)$.
	
	The ``only if'' direction of the second statement follows from the first statement. For the ``if'' direction suppose that $\DD$ is the graph of a function $g \colon \AA \to \BB$.
	Since $\DD$ is closed in $\AA \times \BB$ it follows from the Closed Graph Theorem for compact Hausdorff spaces that $g$ is continuous. If $(x,g(x)) \in \DD$, then invariance under the Hutchinson operator implies that for each $\gamma \in \Gamma$, we have $(\gamma(x),\beta(\gamma) \circ g(x))  \in \DD$. Since $\DD = \gr(g)$ it follows that $\beta(\gamma) \circ g (x) = g \circ \gamma(x)$. Consequently, $(g,\beta) \colon (\AA,\Gamma) \to (\BB,\Lambda)$ is a morphism.
\end{proof}

	Restricting to attractors yields the following Closed Graph Theorem. 
	\begin{cor}[Closed Graph Theorem]\label{cor:closedgraph}
		Let $(\AA,\Gamma)$ and $(\BB,\Lambda)$ be hyperbolic iterated function systems with attractors $\AA$ and $\BB$, respectively. Suppose $f \colon \AA \to \BB$ and $\alpha \colon \Gamma \to \Lambda$. Then $(f,\alpha)$ is a morphism if and only if $\gr(f)$ is the attractor of $(X \times Y, \Gamma \times_\alpha \Lambda)$. 
	\end{cor}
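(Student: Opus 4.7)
The plan is to obtain this as an almost immediate specialisation of Theorem~\ref{thm:graphattractor}, applied in the case where the ambient metric spaces of both iterated function systems are taken to be their own attractors. Note that $(\AA,\Gamma)$ and $(\BB,\Lambda)$ are indeed hyperbolic iterated function systems in their own right since $\gamma(\AA) \subseteq \AA$ for every $\gamma \in \Gamma$ and similarly for $\Lambda$, and the attractor of $(\AA,\Gamma)$ (respectively $(\BB,\Lambda)$) is $\AA$ (respectively $\BB$) by uniqueness in Hutchinson's Theorem.

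For the forward direction, suppose $(f,\alpha)$ is a morphism from $(\AA,\Gamma)$ to $(\BB,\Lambda)$. Applying the first assertion of Theorem~\ref{thm:graphattractor} gives that the attractor of $(\AA \times \BB, \Gamma \times_\alpha \Lambda)$ is $\gr(f|_\AA)$. Since $f$ is defined on all of $\AA$ we have $\gr(f|_\AA) = \gr(f)$, which is the desired conclusion.

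For the converse, suppose that $\gr(f)$ is the attractor of $(\AA \times \BB, \Gamma \times_\alpha \Lambda)$. Then in particular this attractor is the graph of a function from $\AA$ to $\BB$, so the ``if'' direction of the second assertion of Theorem~\ref{thm:graphattractor} (taking $\beta = \alpha$) produces a continuous function $g \colon \AA \to \BB$ such that $(g,\alpha) \colon (\AA,\Gamma) \to (\BB,\Lambda)$ is a morphism and $\gr(g)$ coincides with the same attractor, namely $\gr(f)$. Two functions with equal graphs are equal, so $g = f$, and hence $(f,\alpha)$ is itself a morphism, as required.

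There is no real obstacle here: the corollary is essentially a restatement of Theorem~\ref{thm:graphattractor} once one observes that an iterated function system restricted to its attractor again has that attractor as its unique invariant compact set. The only point requiring explicit comment is the trivial fact that two functions with a common graph coincide, which is what forces the $g$ produced by the theorem to equal the given $f$.
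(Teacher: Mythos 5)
Your proof is correct and follows exactly the route the paper intends: the corollary is stated without proof precisely because it is the specialisation of Theorem~\ref{thm:graphattractor} to the case $X=\AA$, $Y=\BB$, with the forward direction given by the first assertion and the converse by the second assertion (taking $\beta=\alpha$) together with the observation that the resulting $g$ has the same graph as $f$ and hence equals $f$. Your preliminary remark that $(\AA,\Gamma)$ and $(\BB,\Lambda)$ are themselves hyperbolic systems with attractors $\AA$ and $\BB$ is the only point needing checking, and you handle it correctly.
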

\begin{rmk}
	Set theoretically, a function \emph{is} its graph. Consequently, Corollary~\ref{cor:closedgraph} may be interpreted as saying that $f$ \emph{is} the attractor of an iterated function system. 
\end{rmk}


Theorem~\ref{thm:graphattractor} also implies that morphisms between hyperbolic iterated function systems are rare, and completely determined by $\alpha$ on the attractor. 
\begin{cor}[Morphism rigidity]\label{cor:morphismsrigid}
	If $(f,\alpha) \colon (X,\Gamma) \to (Y,\Lambda)$ is a morphism between hyperbolic iterated function systems, then $\restr{f}{\AA}$ is determined entirely by $\alpha$. 	
	In particular, if $(g,\alpha)\colon (X,\Gamma) \to (Y,\Lambda)$ is another morphism, then $\restr{f}{\AA} = \restr{g}{\AA}$.
\end{cor}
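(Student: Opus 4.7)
The plan is to derive the corollary directly from Theorem~\ref{thm:graphattractor} by exploiting the uniqueness of attractors. The key observation is that the fibred system $(X \times Y , \Gamma \times_\alpha \Lambda)$ depends only on the data $(X,\Gamma)$, $(Y,\Lambda)$, and $\alpha$; neither $f$ nor $g$ enters its definition.

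First I would apply Theorem~\ref{thm:graphattractor} to the morphism $(f,\alpha)$ to conclude that the attractor of $(X \times Y , \Gamma \times_\alpha \Lambda)$ equals $\gr(\restr{f}{\AA})$. If $(g,\alpha)$ is another morphism with the same second component, then applying Theorem~\ref{thm:graphattractor} a second time shows that the attractor of the same fibred system equals $\gr(\restr{g}{\AA})$. By the uniqueness clause of Hutchinson's Theorem, these two compact sets must coincide, so $\gr(\restr{f}{\AA}) = \gr(\restr{g}{\AA})$, which is equivalent to $\restr{f}{\AA} = \restr{g}{\AA}$.

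For the first sentence of the corollary — that $\restr{f}{\AA}$ is determined by $\alpha$ — I would observe that the same uniqueness argument shows $\gr(\restr{f}{\AA})$ is the unique fixed point of the Hutchinson operator for $(X \times Y , \Gamma \times_\alpha \Lambda)$, and this fixed point is computable purely from $\Gamma$, $\Lambda$, and $\alpha$ (for instance as the $d_H$-limit of $(\Gamma \times_\alpha \Lambda)^k(K)$ for any $K \in \Kk(X \times Y)$). Thus $\restr{f}{\AA}$ is recovered from $\alpha$ alone via the formula $\restr{f}{\AA}(x) = y$, where $y$ is the unique element such that $(x,y)$ lies in this attractor.

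There is no real obstacle here: the work has already been done in Theorem~\ref{thm:graphattractor}, and the corollary is essentially a restatement using the uniqueness of attractors. The only point requiring mild care is to state clearly that $f$ and $g$ appear only through their graphs on $\AA$, so two morphisms $(f,\alpha)$ and $(g,\alpha)$ must produce the same attractor and hence agree on $\AA$.
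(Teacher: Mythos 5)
Your proposal is correct and is essentially identical to the paper's own proof: both apply Theorem~\ref{thm:graphattractor} to each of $(f,\alpha)$ and $(g,\alpha)$ and then invoke uniqueness of the attractor of the fibred system $(X \times Y, \Gamma \times_\alpha \Lambda)$, which depends only on $\alpha$. No gaps.
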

\begin{proof}
	Theorem~\ref{thm:graphattractor} implies that $\gr(f)$ and $\gr(g)$ are both attractors of $(X \times Y , \Gamma \times_{\alpha} \Lambda)$, so uniqueness of the attractor gives the result. 
\end{proof}

	In light of Corollary~\ref{cor:morphismsrigid} we may use Theorem~\ref{thm:graphattractor} to approach the problem of determining whether there exists a morphism between two hyperbolic iterated function systems. If $(f,\alpha) \colon (X,\Gamma) \to (Y,\Lambda)$ is a morphism, then so is $(\restr{f}{\AA},\alpha) \colon (\AA,\Gamma) \to (\BB,\Lambda)$. Since $\restr{f}{\AA}$ is determined completely by $\alpha$, it suffices to check whether the attractor of $(X \times Y , \Gamma \times_{\alpha} \Lambda)$ is the graph of a function for each of the $|\Lambda|^{|\Gamma|}$ possible choices of $\alpha$. Moreover, if $|\Gamma| = |\Lambda|$ we can determine whether a conjugacy exists by checking each of the $|\Gamma|!$ possible choices of $\alpha$. 
	
	For concrete iterated function systems, the attractor of  $(X \times Y , \Gamma \times_{\alpha} \Lambda)$ may be approximated numerically using the Chaos Game algorithm. This can inform existence or non-existence results about morphisms or conjugacy as seen in Example~\ref{ex:intervalmaps} below.

\begin{example} \label{ex:intervalmaps}
	Consider the unit interval $[0,1]$  with the Euclidean metric, and define iterated function systems $([0,1], \Gamma=\{\gamma_1,\gamma_2\})$ and $([0,1], \Lambda = \{\lambda_1,\lambda_2\})$, where
	\[
	\gamma_1(x) = \frac{2x}{3}, \qquad \gamma_2(x) = \frac{2x}{3} + \frac{1}{3}, \qquad \lambda_1(x) = \frac{3x}{4}, \text{ and} \qquad \lambda_2(x) = \frac{3x}{4} + \frac{1}{4}.
	\]
	At a first glance, the systems $([0,1], \Gamma)$ and $([0,1],\Lambda)$ exhibit a similar behaviour. Both have attractor $[0,1]$, and the sets of overlap are closed intervals given by $\gamma_1([0,1]) \cap \gamma_2([0,1]) = [\frac{1}{3}, \frac{2}{3}]$ and $\lambda_1([0,1]) \cap \lambda_2([0,1]) = [\frac{1}{4}, \frac{3}{4}] $, respectively. It is natural to ask whether these two systems are conjugate. 
	
	In Figure~\ref{fig:intervalmaps} a Chaos Game approximation for the attractor $\DD$ of  $([0,1]^2,\Gamma \times_\alpha \Lambda)$ is pictured for the bijection $\alpha \colon \Gamma \to \Lambda$ given by $\alpha(\gamma_1) = \lambda_1$ and $\alpha(\gamma_2) = \lambda_2$. The attractor of the other bijection is given by a horizontal reflection of Figure~\ref{fig:intervalmaps}. The approximation makes it easy to see that the systems $([0,1],\Gamma)$ and $([0,1],\Lambda)$ are not conjugate as $\DD$ is not the graph of a bijection. 
	\begin{figure}[ht]
		\centering
		\begin{tikzpicture}			
			[vertex/.style={circle, fill=red, inner sep=1pt}]
			
			\def\scale{4}
			
			\node[anchor=south west,inner sep=0] at (0,0) {\includegraphics[width=\scale cm]{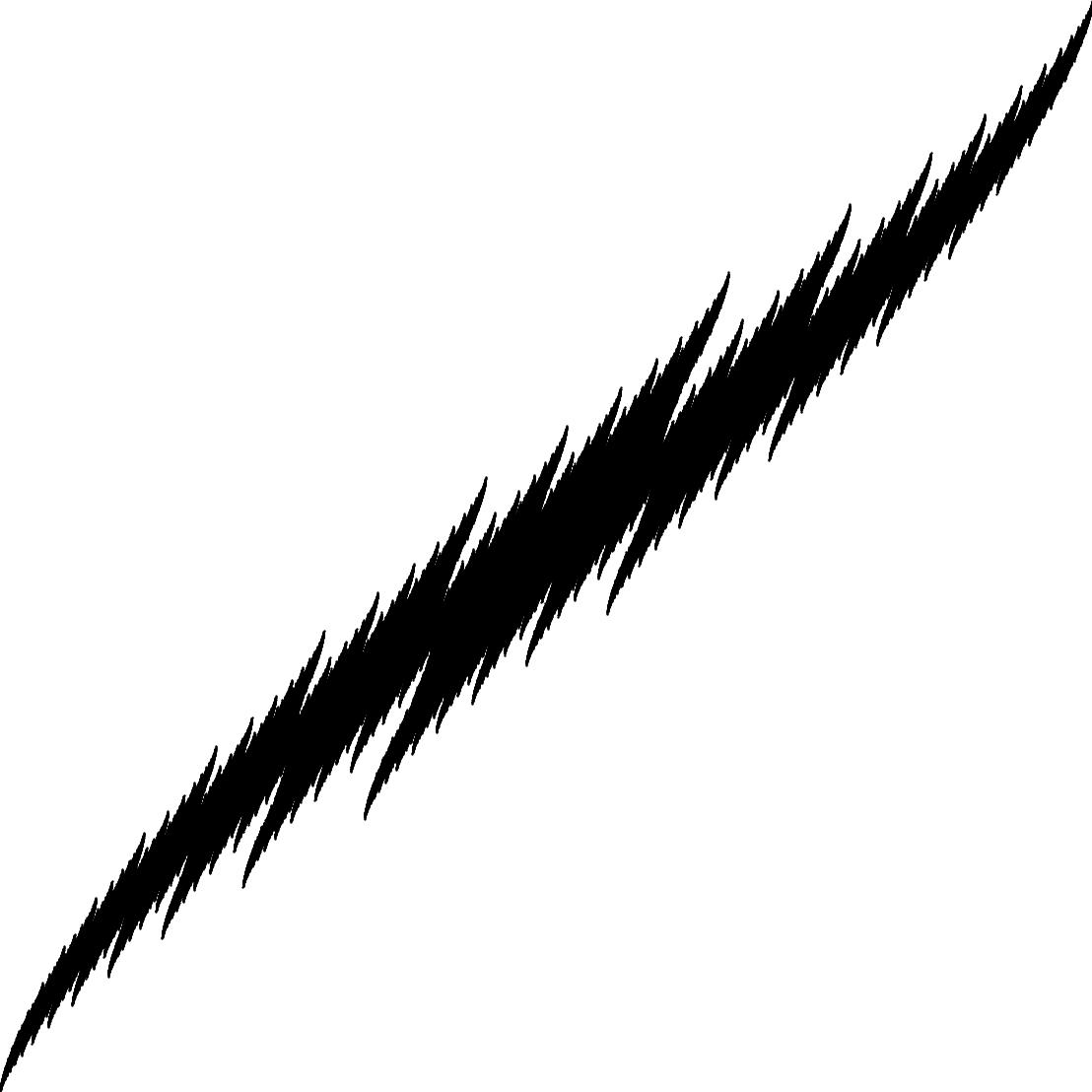}};
			 
			 \node[vertex, label={[label distance=0.1cm]180:\scriptsize$(0,0)$}] (a) at (0,0) {};
			 \node[vertex, label={[label distance=0.1cm]0:\scriptsize$(1,1)$}] (b) at (\scale,\scale) {};
			 \node[vertex, label={[label distance=0.1cm]180:\scriptsize$(\frac{4}{9},\frac{9}{16})$}] (c) at (4*\scale/9,9*\scale/16) {};
			 \node[vertex, label={[label distance=0.1cm]0:\scriptsize$(\frac{5}{9},\frac{7}{16})$}] (d) at (5*\scale/9,7*\scale/16) {};
			
		\end{tikzpicture}
		\caption{A Chaos Game approximation of the attractor $\DD$ of the system $([0,1]^2, \Gamma \times_\alpha \Lambda)$ from Example~\ref{ex:intervalmaps}.}
		\label{fig:intervalmaps}
	\end{figure}
	Formalising the observation, $(0,0)$ and $(1,1)$ belong to $\DD$ as the respective fixed points of $\gamma_1^\alpha$ and $\gamma_2^\alpha$. Then $(\frac{5}{9},\frac{7}{16}) = \gamma_2^\alpha \circ \gamma_2^\alpha(0,0)$ and $(\frac{4}{9},\frac{9}{16}) = \gamma_1^\alpha \circ \gamma_1^\alpha (1,1)$ also belong to $\DD$. If $\DD$ were the graph of a (necessarily continuous) function $f$, then $f(0) =0$, $f(1) = 1$, $f(\frac{4}{9}) = \frac{9}{16}$, and $f(\frac{5}{9}) = \frac{7}{16}$. The Intermediate Value Theorem implies that $f$ could not be injective.
\end{example}

 Corollary~\ref{cor:closedgraph} also gives an alternative definition of the code map.
\begin{example} \label{ex:codemapfractal}
	Consider an iterated function system $(\AA,\Gamma)$ with attractor $\AA$. Let $L \colon \{1,\ldots,N\} \to \Gamma$ be a bijective labelling of the maps in $\Gamma$ and let $\beta_L \colon \Sigma_N \mapsto \Gamma$ denote the induced map as in Example~\ref{ex:codemapfractal}. Then the code map $\pi_L \colon \Omega_N \to \AA$ is uniquely determined as the function whose graph is the attractor of the fibred system $(\Omega_N \times \AA, \Sigma_N \times_{\beta_L} \Gamma)$.

\end{example}

Restricting to attractors, the fibred system associated to a morphism is always conjugate to the domain. 
\begin{cor}\label{cor:graph_conjugate}
	Suppose that $(f,\alpha) \colon (\AA,\Gamma) \to (\BB,\Lambda)$ is a morphism of hyperbolic iterated function systems  with respective attractors $\AA$ and $\BB$. Then $(\gr(f), \Gamma \times_\alpha\Lambda)$ is conjugate to $(\AA,\Gamma)$. 
\end{cor}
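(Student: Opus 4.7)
The plan is to exhibit an explicit conjugacy $(h,\tilde\alpha) \colon (\gr(f), \Gamma \times_\alpha \Lambda) \to (\AA,\Gamma)$. Theorem~\ref{thm:graphattractor} already tells us that $\gr(f)$ is the attractor of the fibred system, so each $\gamma^\alpha$ restricts to a proper contraction of $\gr(f)$ and $(\gr(f), \Gamma\times_\alpha \Lambda)$ is a legitimate iterated function system. The natural candidate for $h$ is the restriction of the first-coordinate projection, $h(x,f(x)) = x$, and the natural candidate for $\tilde\alpha$ is the assignment $\gamma^\alpha \mapsto \gamma$.

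I would then verify the three items in Definition~\ref{dfn:morphism}(iii). First, $h$ is a homeomorphism: it is continuous as a restriction of a coordinate projection, it is surjective since every $x \in \AA$ appears as $h(x,f(x))$, and it is injective because the second coordinate $f(x)$ is determined by the first. Its set-theoretic inverse $x \mapsto (x,f(x))$ is continuous because $f$ is; alternatively one invokes the fact that $h$ is a continuous bijection from the compact space $\gr(f)$ onto the Hausdorff space $\AA$. Second, $\tilde\alpha$ is a bijection: it is surjective by the definition of $\Gamma \times_\alpha \Lambda$, and injective because reading off the first coordinate of $\gamma^\alpha(x,y) = (\gamma(x),\alpha(\gamma)(y))$ recovers $\gamma$. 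Third, the intertwining diagram commutes: for any $(x,f(x)) \in \gr(f)$ and $\gamma \in \Gamma$, the morphism identity $f \circ \gamma = \alpha(\gamma) \circ f$ gives
\[
h\bigl(\gamma^{\alpha}(x,f(x))\bigr) = h\bigl(\gamma(x),\alpha(\gamma)(f(x))\bigr) = h\bigl(\gamma(x),f(\gamma(x))\bigr) = \gamma(x) = \tilde\alpha(\gamma^\alpha)\bigl(h(x,f(x))\bigr).
\]

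There is no genuine obstacle here; the result is essentially a repackaging of Theorem~\ref{thm:graphattractor} combined with the standard observation that a continuous map is determined by, and homeomorphic to, its graph. The only point requiring a brief justification is the continuity of $h^{-1}$, which is immediate either from continuity of $f$ or from the compact-to-Hausdorff principle applied to $h$.
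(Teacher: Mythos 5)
Your proposal is correct and follows essentially the same route as the paper: both exhibit the first-coordinate projection together with $\gamma^\alpha \mapsto \gamma$ as the conjugacy, invoke Theorem~\ref{thm:graphattractor} to identify $\gr(f)$ as the attractor, and justify that the projection is a homeomorphism via the continuous-bijection-from-compact-to-Hausdorff principle. You simply spell out the intertwining computation and the bijectivity of the map on contractions more explicitly than the paper does.
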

\begin{proof}
	Theorem~\ref{thm:graphattractor} implies that $\gr(f)$ is the attractor of $(\gr(f), \Gamma \times_\alpha\Lambda)$.	Let $p \colon \gr(f) \to \AA$ denote the projection onto the first factor. Since $\gr(f)$ is the graph of a function, $p$ is a continuous bijection from a compact space to a Hausdorff space, and therefore a homeomorphism. It follows that $(p,\gamma^\alpha \mapsto \gamma)$ is the desired conjugacy. 
\end{proof}

As a consequence of Corollary~\ref{cor:graph_conjugate}, topological properties of the system $(\AA,\Gamma)$ are shared by $(\gr(f), \Gamma \times_\alpha\Lambda)$. For example, the covering dimension of $\AA$ is equal to the covering dimension of $\gr(f)$. The relationship at the level of geometric properties is not so clear, because geometric properties of $\gr(f)$ depend on the choice of metric on $\gr(f)$.


We finish by showing that morphisms of of iterated function systems lift uniquely to morphisms between code spaces.
\begin{prop}
	Let $(\AA,\Gamma=\{\gamma_1,\ldots,\gamma_N\})$ and $(\BB,\Lambda = \{\lambda_1,\ldots,\lambda_M\})$ be hyperbolic iterated function systems with chosen labellings of the contractions, and attractors $\AA$ and $\BB$, respectively. Let $(\pi_\Gamma,\beta_\Gamma) \colon (\Omega_N,\Sigma_N) \to (\AA,\Gamma)$ and $(\pi_\Lambda, \beta_\Lambda) \colon (\Omega_N,\Sigma_N) \to (\BB,\Lambda)$ denote the morphisms induced by the corresponding code maps. For any morphism $(f,\alpha) \colon (\AA,\Gamma) \to (\BB,\Lambda)$ there exists a unique morphism $(\widetilde{f},\widetilde{\alpha}) \colon (\Omega_N,\Sigma_N) \to (\Omega_M,\Sigma_M)$ making the diagram
	\begin{equation}\label{eq:code_cd}
		\begin{tikzcd}
			{(\Omega_N,\Sigma_N)} \arrow[r, "{(\widetilde{f},\widetilde{\alpha})}"] \arrow[d, "{(\pi_\Gamma,\beta_\Gamma)}"'] & {(\Omega_M,\Sigma_M)} \arrow[d, "{(\pi_\Lambda,\beta_\Lambda)}"] \\
			{(\AA,\Gamma)} \arrow[r, "{(f,\alpha)}"]                                                                          & {(\BB,\Lambda)}                                                 
		\end{tikzcd}
	\end{equation}
	commute.
\end{prop}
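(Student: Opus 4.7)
The plan is to construct $\widetilde{\alpha}$ and $\widetilde{f}$ by hand and then invoke morphism rigidity for uniqueness. Since $\beta_\Lambda \colon \Sigma_M \to \Lambda$ is a bijection, the $\alpha$-component of diagram~\eqref{eq:code_cd} forces $\widetilde{\alpha} := \beta_\Lambda^{-1} \circ \alpha \circ \beta_\Gamma$. Writing $\widetilde{\alpha}(\sigma_i) = \sigma_{j(i)}$, where $j(i) \in \{1,\ldots,M\}$ is the unique index with $\lambda_{j(i)} = \alpha(\gamma_i)$, I would define $\widetilde{f} \colon \Omega_N \to \Omega_M$ letter-wise by $\widetilde{f}(w_1 w_2 w_3 \cdots) := j(w_1) j(w_2) j(w_3) \cdots$. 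This map is $1$-Lipschitz in the Cantor metric, and a direct check gives $\widetilde{f} \circ \sigma_i = \widetilde{\alpha}(\sigma_i) \circ \widetilde{f}$, so $(\widetilde{f},\widetilde{\alpha}) \colon (\Omega_N, \Sigma_N) \to (\Omega_M, \Sigma_M)$ is a morphism.

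The remaining task is to check the $f$-component of \eqref{eq:code_cd}, namely $\pi_\Lambda \circ \widetilde{f} = f \circ \pi_\Gamma$. My strategy is to compare both sides via the nested intersection formula for the code map. For $w \in \Omega_N$, the point $\pi_\Lambda(\widetilde{f}(w))$ is the unique element of
\[
\bigcap_{k=1}^{\infty} \lambda_{j(w_1)} \circ \cdots \circ \lambda_{j(w_k)}(\BB) = \bigcap_{k=1}^{\infty} \alpha(\gamma_{w_1}) \circ \cdots \circ \alpha(\gamma_{w_k})(\BB).
\]
Conversely, $\pi_\Gamma(w) \in \gamma_{w_1} \circ \cdots \circ \gamma_{w_k}(\AA)$ for every $k$, so applying $f$ and iterating the intertwining $f \circ \gamma = \alpha(\gamma) \circ f$ together with the containment $f(\AA) \subseteq \BB$ from Lemma~\ref{lem:imageofAinB} places $f(\pi_\Gamma(w))$ inside $\alpha(\gamma_{w_1}) \circ \cdots \circ \alpha(\gamma_{w_k})(\BB)$ for every $k$. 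Since the intersection above is a singleton, the two sides must agree.

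Uniqueness is then quick: any compatible $\widetilde{\alpha}$ must satisfy $\beta_\Lambda \circ \widetilde{\alpha} = \alpha \circ \beta_\Gamma$, so bijectivity of $\beta_\Lambda$ pins it down; and once $\widetilde{\alpha}$ is fixed, Corollary~\ref{cor:morphismsrigid} (morphism rigidity) applied to $(\Omega_N, \Sigma_N)$---whose attractor is all of $\Omega_N$---forces $\widetilde{f}$ to coincide with the map constructed above. The only step that needs any real care is the commutativity of the $f$-component; the rest is essentially bookkeeping or a direct appeal to results already established.
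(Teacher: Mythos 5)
Your construction of $\widetilde{\alpha}$ and $\widetilde{f}$ is exactly the paper's, and your uniqueness argument (bijectivity of $\beta_\Lambda$ pins down $\widetilde{\alpha}$, then rigidity on the attractor $\Omega_N$ pins down $\widetilde{f}$) matches the paper's appeal to Corollary~\ref{cor:closedgraph}. Where you differ is in the one step you correctly flag as needing care: the paper's proof never explicitly verifies $\pi_\Lambda \circ \widetilde{f} = f \circ \pi_\Gamma$, leaving it to be absorbed by the uniqueness/rigidity machinery (both $\pi_\Lambda \circ \widetilde{f}$ and $f \circ \pi_\Gamma$ intertwine $\Sigma_N$ with $\Lambda$ via $\beta_\Lambda \circ \widetilde{\alpha} = \alpha \circ \beta_\Gamma$, so they agree by Corollary~\ref{cor:morphismsrigid}), whereas you check it directly from the nested-intersection formula for the code map together with $f(\AA) \subseteq \BB$ from Lemma~\ref{lem:imageofAinB}. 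Your direct verification is correct --- the intersection is a singleton because the compositions are contractions, and both candidate points lie in every set of the nested sequence --- and it makes explicit a step the paper leaves implicit, at the cost of a slightly longer argument; the rigidity route gets commutativity for free once one notices that the two composites share the same $\alpha$-data.
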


\begin{proof}
	Since $\beta_\Lambda$ is a bijection, we can define $\widetilde{\alpha} := \beta_\Lambda^{-1} \circ \alpha \circ \beta_\Gamma$. Then $\widetilde{\alpha}$ induces a function $h \colon \{1,\ldots, N\} \to \{1, \ldots, M\}$ defined by $\widetilde{\alpha} ( \sigma_i) = \sigma_{h(i)}$ and $h$ induces a continuous map $\widetilde{f} \colon \Omega_N \to \Omega_M$ defined by $\widetilde{f}(w_1w_2w_3 \cdots) = h(w_1)h(w_2)h(w_3) \cdots$ for all $w_k \in \{1, \ldots, N\}$. In particular, $\widetilde{f} \circ \sigma_i = \widetilde{\alpha}(\sigma_i) \circ \widetilde{f}$ for all $i \in \{1 ,\ldots, N\}$. Uniqueness of $\widetilde{f}$ follows from Corollary~\ref{cor:closedgraph}. 
\end{proof}
\begin{rmk}
	It is not the case that every morphism $(\widetilde{f},\widetilde{\alpha}) \colon (\Omega_N,\Sigma_N) \to (\Omega_M,\Sigma_M)$ descends to a morphism $(f,\alpha) \colon (\AA,\Gamma) \to (\BB,\Lambda)$ making $\eqref{eq:code_cd}$ commute. Indeed, if $M = N$, $(\BB,\Lambda) = (\Omega_N,\Sigma_{N})$, and  $(\widetilde{f},\widetilde{\alpha}) = (\pi_\Lambda,\beta_\Lambda)=(\id_{\Omega_N},\id_{\widetilde{\Sigma_N}})$, then commutativity of \eqref{eq:code_cd} would imply that such an $f$ is an inverse for $\pi_\Gamma$---regardless of $\AA$---which is absurd.
\end{rmk}

As a final remark, we note that the notion of morphism introduced in Definition~\ref{dfn:morphism} generalises readily to the topological iterated function systems of Kameyama \cite{Kam93} or Kieninger \cite{Kie02}.
Although we do not pursue it here, the author thinks that it would be interesting to see how the collection of invariant sets in the fibred system affects the existence of morphisms for more general topological iterated function systems. 
\section{References}

\printbibliography

\bigskip{\footnotesize%
	\textsc{School of Mathematics and Applied Statistics,
		University of Wollongong, Wollongong,
		NSW  2522, Australia} \par
	\textit{E-mail:} \texttt{amundey@uow.edu.au} 
}

\end{document}